\documentclass[11pt]{article}

\usepackage[margin=1in]{geometry}
\usepackage{amsmath,amssymb,amsthm,mathtools,bm}
\usepackage{hyperref}
\usepackage{algorithm}
\usepackage{algpseudocode}
\usepackage[capitalize,nameinlink]{cleveref}

\usepackage{todonotes}

\newtheorem{theorem}{Theorem}[section]
\newtheorem{proposition}[theorem]{Proposition}
\newtheorem{remark}[theorem]{Remark}
\newtheorem{definition}[theorem]{Definition}
\newtheorem{example}[theorem]{Example}

\newcommand{\R}{\mathbb R}

\DeclareMathOperator{\supp}{supp}

\providecommand{\keywords}[1]
{
  \small	
  {\textit{Keywords---}} #1
}

\begin{document}

\title{\bf A Discrete Cumulative Distribution Transform via Optimal Transport}
\author{{\sc Harbir Antil$^1$, Gustavo Rohde$^2$ and Aryan Saxena$^1$} \\  \\ 
$^1$Center for Mathematics and Artificial Intelligence and \\
Department of Mathematical Sciences, 
George Mason University, Fairfax VA 22030. \\
$^2$Department of Biomedical Engineering and Electrical and \\ 
Computer Engineering University of Virginia 
Charlottesville, VA 22908, USA. 
\thanks{HA and AS are partially supported by the Office of Naval Research (ONR) under Award NO: N00014-24-1-2147. NSF grant DMS-2408877, and the Air Force Office of Scientific Research (AFOSR) under Award NO: FA9550-25-1-0231. GR is partially supported by NIH award GM130825, and ONR award N000142212505.}}
\date{}
\maketitle

\begin{abstract}
This paper develops a fully discrete cumulative distribution transform
(CDT) for atomic probability measures on the real line. The transform is
defined through monotone quantile maps and admits explicit linear-time
algorithms for both forward transformation and inverse reconstruction
based solely on cumulative mass matching. Unlike the classical continuous
setting, deterministic transport between atomic measures cannot generally
split masses, so exact reconstruction may fail at finite resolution. We
establish a precise cumulative-mass compatibility criterion for exact finite-resolution recovery and prove weak convergence of reconstructed measures under reference refinement. Several structural properties of the discrete CDT
are derived, including translation, composition, and scaling laws, and
the framework is extended to a discrete signed cumulative distribution
transform with thresholded stabilization near zero crossings. By avoiding
continuous interpolation, the proposed framework provides a simple
fixed-reference transport representation for discrete data. Numerical
examples illustrate translation linearization, compatibility-controlled
reconstruction, refinement consistency, and stabilization of the signed
transform.
\end{abstract}

\keywords{cumulative distribution transform,
optimal transport,
quantile transforms,
atomic measures,
empirical distributions,
transport-based representations,
signal representations,
distributional data analysis,
signed measures,
representation learning.}

\section{Introduction}

The cumulative distribution transform (CDT) is a transport-based signal
representation derived from monotone optimal transport on the real line.
Given a reference probability measure and a target measure, the CDT
represents the target through the unique monotone transport map pushing
the reference onto the target. Since its introduction, the CDT and its
extensions have attracted considerable attention in signal processing,
pattern recognition, imaging, inverse problems, and machine learning
because several nonlinear geometric transformations become linear in
transport coordinates \cite{kolouri2018cdt,kolouri2017optimal,aldroubi2021signed}. In particular, translations,
scalings, and certain monotone deformations are transformed into simple
algebraic operations on the associated transport maps. These properties have enabled new, computational and data efficient, solutions a variety of problems and applications including parametric signal estimation \cite{rubaiyat2020parametric,nichols2019time}, communications \cite{park2018multiplexing,neary2021transport}, system identification \cite{rubaiyat2024data}, structural health monitoring \cite{wang2020fault,zhang2024combining,rubaiyat2020parametric, morgan2025corrosion}, reduced order modeling solutions of PDEs \cite{ren2021model}, signal classification \cite{rubaiyat2024end} and others.  

The original CDT \cite{kolouri2018cdt} was formulated primarily for absolutely continuous
probability densities using continuous cumulative distribution functions
and generalized inverse maps. This continuous framework is closely tied
to one-dimensional optimal transport and monotone rearrangement
theory \cite{villani2021topics,FSantambrogio_2015a,ollivier2014optimal,
ambrosio2005gradient}. However, many datasets arising in practice are
inherently discrete, including empirical measures, sampled signals,
histograms, point clouds, particle approximations, and discretized
probability distributions. In such settings, interpolation and density
estimation may introduce artificial smoothing, numerical ambiguity, and
additional computational complexity.

The purpose of this paper is to develop a mathematically rigorous and fully 
discrete cumulative distribution transform (CDT) directly for atomic probability 
measures on the real line. Given a discrete reference measure 
$\sigma = \sum_{j=1}^m q_j \delta_{y_j}$ with sorted support 
$y_1 < y_2 < \dots < y_m$, and a discrete target measure 
$\mu = \sum_{i=1}^n p_i \delta_{x_i}$ with sorted support 
$x_1 < x_2 < \dots < x_n$, we define the discrete CDT through the monotone 
quantile representative
\[
T_\mu = F_\mu^{-1} \circ F_\sigma.
\]
This formulation yields explicit cumulative-mass characterizations and simple 
linear-time algorithms requiring no interpolation, density estimation, or 
continuous inverse-CDF approximation. Crucially, by fixing a reference measure 
with $m$ atoms, the transform maps any arbitrary target measure to a 
fixed-dimensional vector $T_\mu \in \R^m$ regardless of $n$. This provides a fixed-reference transport representation of uniform
dimension that is naturally suited for comparison and downstream
learning tasks.

A central theme of the paper is the distinction between the discrete quantile 
representation and the reconstructed pushforward measure induced by the 
transport map. Unlike the classical continuous setting, deterministic transport 
maps between atomic measures cannot generally split masses---a well-known 
bottleneck that typically forces a pivot from Monge transport maps to 
Kantorovich transport plans in standard optimal transport 
literature~\cite{FSantambrogio_2015a}. In this work, we deliberately retain a 
strict vector-valued map formulation and treat the resulting discrepancy as an 
intrinsic feature of the representation. Consequently, the reconstructed 
measure $(T_\mu)_\#\sigma$ does not necessarily coincide with the original 
target measure~$\mu$. We characterize a precise cumulative-mass compatibility criterion under
which exact finite-resolution reconstruction is possible and show that
failure of this condition is an inherent obstruction of deterministic
atomic transport. We further prove that this obstruction disappears
asymptotically under reference refinement: as the maximal reference mass
tends to zero, the reconstructed measures converge weakly to the target.

In addition to the forward transform, we develop explicit inverse 
reconstruction algorithms and establish several structural properties of the 
discrete CDT, including translation, composition, and scaling laws. We also 
extend the framework to a discrete signed cumulative distribution transform 
(SCDT), introducing a thresholded decomposition designed to regularize the 
positive-negative Jordan splitting. This thresholding stabilizes the transport 
representation against high-frequency oscillations or small numerical 
perturbations near zero crossings, providing a robust discrete counterpart to 
the continuous signed transport framework introduced in~\cite{aldroubi2021signed}.

The resulting framework provides a simple and efficient quantile-based
transport representation for atomic probability measures that is
naturally suited for empirical and sampled data. While related
generalized-inverse computations already arise in existing numerical
implementations of CDT and SCDT, the present work provides an explicit
discrete monotone transport interpretation together with a rigorous
analysis of reconstruction, compatibility, refinement consistency, and
signed zero-crossing effects.

The remainder of the paper is organized as follows.
Section~\ref{sec:monotone} reviews monotone transport on the real line.
Section~\ref{sec:dcdt} introduces the discrete CDT and derives explicit
forward algorithm. Inverse discrete CDT is studied in Section~\ref{s:iDCDT}. 
Section~\ref{sec:properties} establishes structural properties and proves
reconstruction consistency under refinement.
Section~\ref{sec:scdt} develops the discrete signed CDT.
Finally, Section~\ref{sec:numerics} presents numerical illustrations
demonstrating translation linearization, compatibility-controlled
reconstruction, refinement consistency, and stabilization near zero
crossings.

\section{Preliminaries: Monotone Transport}
\label{sec:monotone}

Let \(\mathcal P(\R)\) denote the set of Borel probability measures on \(\R\).
For \(\mu\in\mathcal P(\R)\), its cumulative distribution function (CDF) is defined by
\[
F_\mu(x)=\mu((-\infty,x]), \qquad x\in\R.
\]
The function \(F_\mu\) is nondecreasing, right-continuous, and satisfies
\[
\lim_{x\to-\infty}F_\mu(x)=0, \qquad \lim_{x\to+\infty}F_\mu(x)=1.
\]
The generalized inverse (or quantile function) of \(F_\mu\) is defined for \(s\in(0,1]\) by
\begin{equation}\label{eq:genearlized_inverse}
F_\mu^{-1}(s) = \inf\{x\in\R:\ F_\mu(x)\ge s\}.
\end{equation}
The map \(F_\mu^{-1}\) is nondecreasing and left-continuous.

If \(T:\R\to\R\) is measurable and \(\sigma\in\mathcal P(\R)\), the pushforward of \(\sigma\) under \(T\) is the probability measure \(T_\#\sigma\in\mathcal P(\R)\) defined by
\begin{equation}\label{eq:pushforward}
(T_\#\sigma)(A) = \sigma(T^{-1}(A))
\end{equation}
for every Borel set \(A\subset\R\). Equivalently, \(T_\#\sigma\) is the distribution obtained by transporting the mass of \(\sigma\) through the map \(T\).

Next we state the classical monotone rearrangement theorem whose proof can be found in \cite[Section~2.1]{FSantambrogio_2015a}. 

\begin{theorem}[Monotone transport on the line]
Let \(\sigma,\mu\in\mathcal P(\R)\), and suppose that \(\sigma\) is nonatomic. Then there exists a nondecreasing measurable map \(T:\R\to\R\) satisfying
\[
T_\#\sigma=\mu.
\]
Moreover, \(T\) is unique \(\sigma\)-almost everywhere and is given by
\[
T = F_\mu^{-1}\circ F_\sigma.
\]
\end{theorem}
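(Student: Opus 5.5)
The plan has two parts: existence, by showing that $T = F_\mu^{-1}\circ F_\sigma$ is nondecreasing, measurable and pushes $\sigma$ forward to $\mu$; and uniqueness, by showing that any nondecreasing $S$ with $S_\#\sigma=\mu$ agrees with $T$ off a $\sigma$-null set. The argument reduces everything to the uniform measure $\lambda$ on $(0,1)$, using two classical facts.

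\emph{Fact (a).} If $\sigma$ is nonatomic, then $F_\sigma$ is continuous and $(F_\sigma)_\#\sigma=\lambda$. I will prove this by checking $\sigma(\{x: F_\sigma(x)\le s\}) = s$ for every $s\in(0,1)$. Continuity and the limits at $\pm\infty$ let the intermediate value theorem produce a largest point $x_s$ with $F_\sigma(x_s)=s$. Since $F_\sigma$ is nondecreasing, $\{F_\sigma\le s\}=(-\infty,x_s]$, and this set has mass $s$.

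\emph{Fact (b).} For the generalized inverse \eqref{eq:genearlized_inverse}, we have $(F_\mu^{-1})_\#\lambda=\mu$. This follows from the Galois-type equivalence $F_\mu^{-1}(s)\le x \iff s\le F_\mu(x)$. To verify it:
\begin{itemize}
\item Right-continuity of $F_\mu$ makes the infimum attained, so $F_\mu(F_\mu^{-1}(s))\ge s$; then monotonicity of $F_\mu$ gives the forward direction.
\item The reverse direction is immediate from the definition of the infimum.
\end{itemize}
Consequently $\lambda\{s: F_\mu^{-1}(s)\le x\}=F_\mu(x)$, so the pushforward has CDF $F_\mu$ and hence equals $\mu$.

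\emph{Existence.} Composing (a) and (b) gives $T_\#\sigma=(F_\mu^{-1})_\#(F_\sigma)_\#\sigma=\mu$. A composition of nondecreasing functions is nondecreasing. Monotone functions are Borel measurable, since their sublevel sets are intervals. One technicality: $F_\sigma$ may take the values $0$ or $1$. The set $\{F_\sigma\in\{0,1\}\}$ is $\sigma$-null by (a), so $T$ can be defined arbitrarily there, for instance by extending $F_\mu^{-1}$ monotonically.

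\emph{Uniqueness.} This is the step I expect to be the main obstacle. Let $S$ be nondecreasing with $S_\#\sigma=\mu$. The key is to show that $S(x)=F_\mu^{-1}(F_\sigma(x))$ for $\sigma$-a.e.\ $x$.
\begin{itemize}
\item For $x$ in the support, monotonicity of $S$ gives
\[
\{y\le x\}\subset\{S(y)\le S(x)\}.
\]
Applying $\sigma$ yields $F_\sigma(x)\le F_\mu(S(x))$, hence $F_\mu^{-1}(F_\sigma(x))\le S(x)$ by the equivalence in (b).
\item For the reverse inequality, take any $z<S(x)$. Then $\{S\le z\}\subset\{y<x\}$, so $F_\mu(z)\le\sigma((-\infty,x))=F_\sigma(x)$, using that $\sigma$ has no atoms.
\end{itemize}
The second bullet shows that no $z<S(x)$ satisfies $F_\mu(z)>F_\sigma(x)$. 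This almost gives $S(x)\le F_\mu^{-1}(F_\sigma(x))$, but it fails exactly when $F_\mu$ is flat at level $F_\sigma(x)$ to the left of $S(x)$. So the delicate point is to show that the exceptional set
\[
\{x: F_\mu^{-1}(F_\sigma(x))<S(x)\}
\]
is $\sigma$-null. On this set, $F_\sigma(x)$ lies in the set of levels $s$ at which $F_\mu$ has a flat piece, i.e.\ $\{s: F_\mu^{-1}(s)<F_\mu^{-1}(s+)\}$. There are at most countably many such $s$, because they are jump points of the monotone function $F_\mu^{-1}$. By (a), $F_\sigma(x)$ is uniformly distributed, so the preimage of this countable set is $\sigma$-null. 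This closes the argument.
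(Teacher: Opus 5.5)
The paper gives no proof of its own and cites Santambrogio, Section~2.1. Your argument is essentially that proof, and its steps are correct: $(F_\sigma)_\#\sigma$ is uniform on $(0,1)$; $(F_\mu^{-1})_\#\lambda=\mu$ via the equivalence $F_\mu^{-1}(s)\le x\iff s\le F_\mu(x)$; existence follows by composition; and uniqueness comes from the two set inclusions, the countability of the levels at which $F_\mu$ is flat (equivalently, the jumps of $F_\mu^{-1}$), and the uniformity of $F_\sigma$ under $\sigma$.

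One claim should be retracted: that $T$ can be extended monotonically, as a real-valued map, across $\{F_\sigma\in\{0,1\}\}$. This is impossible in general. If $\sigma$ is uniform on $[0,1]$ and $\mu$ is Gaussian, any nondecreasing $T:\R\to\R$ maps $[0,1]$ into $[T(0),T(1)]$, so $T_\#\sigma$ has compact support and cannot equal $\mu$. In that case no nondecreasing real-valued $T$ on all of $\R$ with $T_\#\sigma=\mu$ exists at all. The statement, as in Santambrogio, must therefore be read with $T$ defined and nondecreasing only on a set of full $\sigma$-measure, and that is exactly what your argument establishes.
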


\section{Discrete CDT}
\label{sec:dcdt}

\subsection{Discrete reference measures}

We now develop a fully discrete analogue of the cumulative distribution transform for atomic probability measures on the real line. Let
\[
\sigma = \sum_{j=1}^m q_j\delta_{y_j}, \qquad y_1 < \dots < y_m,
\quad 
\mbox{where}
\quad 
q_j > 0, \qquad \sum_{j=1}^m q_j = 1.
\]
The measure \(\sigma\) will be called the \emph{reference measure}. The support locations \(y_j\) define the reference grid, while the weights \(q_j\) define the reference mass distribution.
Similarly, let
\[
\mu = \sum_{i=1}^n p_i\delta_{x_i}, \qquad x_1 < \dots < x_n,
\quad 
\mbox{with}
\quad 
p_i > 0, \qquad \sum_{i=1}^n p_i = 1.
\]

\begin{remark}[Common reference measure]
\rm 
The reference measure \(\sigma\) is fixed throughout the transform construction. All discrete CDT representations are therefore computed relative to the same reference cumulative mass grid. This common reference structure is essential: it places all transformed signals in a canonical coordinate system and makes pointwise comparison of different transforms meaningful.
\end{remark}

\subsection{Definition of the discrete CDT}

\begin{definition}[Discrete CDT]
\label{def:DCDT}
The discrete cumulative distribution transform of \(\mu\) relative to the reference measure \(\sigma\) is the monotone quantile map
\[
\mathcal C_\sigma(\mu) = T_\mu,
\]
defined by
\[
T_\mu = F_\mu^{-1}\circ F_\sigma .
\]
\end{definition}
Since \(F_\sigma\) is constant between consecutive reference atoms, the transform is completely determined by its values on the reference support \(\{y_j\}_{j=1}^m\).

\subsection{Explicit computable formula}

Define cumulative sums
\[
Q_j = \sum_{\ell=1}^j q_\ell, \qquad P_i = \sum_{k=1}^i p_k.
\]

\begin{proposition}[Discrete quantile formula]
\label{prop:quantileFormula}
For every \(j\in\{1,\dots,m\}\),
\[
T_\mu(y_j) = x_i, \qquad i=\min\{k:\ P_k\ge Q_j\}.
\]
\end{proposition}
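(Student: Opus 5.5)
The proof is a direct unwinding of \cref{def:DCDT} combined with the generalized inverse \eqref{eq:genearlized_inverse}, in two steps. First, I evaluate the reference CDF at its own atoms. Since \(\sigma=\sum_{\ell} q_\ell\delta_{y_\ell}\) with \(y_1<\dots<y_m\), we have \(F_\sigma(y_j)=\sigma((-\infty,y_j])=\sum_{\ell\le j}q_\ell=Q_j\). Positivity of the \(q_\ell\) gives \(Q_j\in(0,1]\), so \(F_\mu^{-1}(Q_j)\) is well defined by \eqref{eq:genearlized_inverse}. Hence \(T_\mu(y_j)=F_\mu^{-1}(Q_j)=\inf\{x\in\R:\ F_\mu(x)\ge Q_j\}\).

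Second, I compute this infimum using the step structure of \(F_\mu\). Writing \(P_0=0\), \(x_0=-\infty\) and \(x_{n+1}=+\infty\), the CDF of \(\mu\) is
\[
F_\mu(x)=P_k \quad\text{for } x\in[x_k,x_{k+1}),\qquad k=0,\dots,n.
\]
Set \(i=\min\{k:\ P_k\ge Q_j\}\). This set is nonempty because \(P_n=1\ge Q_j\), and \(i\ge1\) because \(P_0=0<Q_j\). I then check two inclusions. On one side, \(F_\mu(x_i)=P_i\ge Q_j\), so \(x_i\) belongs to the set and the infimum is at most \(x_i\). On the other side, for \(x<x_i\) we have \(x\in[x_k,x_{k+1})\) for some \(k\le i-1\), so \(F_\mu(x)=P_k\le P_{i-1}<Q_j\) by minimality of \(i\) and monotonicity of \(P_k\). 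Thus no \(x<x_i\) lies in the set. The set is therefore exactly \([x_i,\infty)\), its infimum is attained at \(x_i\), and \(T_\mu(y_j)=x_i\).

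No step is a serious obstacle. The only point needing care is the boundary bookkeeping: right-continuity of \(F_\mu\) makes the inequality \(F_\mu(x_i)\ge Q_j\) hold at the atom itself, and \(P_0=0<Q_j\) together with \(P_n=1\) guarantees \(1\le i\le n\).
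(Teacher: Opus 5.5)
Your proof is correct and follows essentially the same route as the paper: first evaluate $F_\sigma(y_j)=Q_j$, then use the step structure of $F_\mu$ to show $F_\mu(x)\le P_{i-1}<Q_j$ for $x<x_i$ while $F_\mu(x_i)=P_i\ge Q_j$. Your added bookkeeping, namely that the index set is nonempty because $P_n=1\ge Q_j$ and that $i\ge 1$ because $P_0=0<Q_j$, makes explicit a well-definedness point the paper leaves implicit.
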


\begin{proof}
By definition of the discrete CDT,
\[
T_\mu(y_j) = F_\mu^{-1}(F_\sigma(y_j)).
\]
Since
\[
F_\sigma(y_j) = \sum_{\ell=1}^j q_\ell = Q_j,
\]
we obtain
\[
T_\mu(y_j) = F_\mu^{-1}(Q_j).
\]
By the definition of the generalized inverse (cf.~\eqref{eq:genearlized_inverse}),
$
F_\mu^{-1}(s) = \inf\{x\in\R:\ F_\mu(x)\ge s\}.
$
Since \(\mu=\sum_{i=1}^n p_i\delta_{x_i}\), the cumulative distribution function \(F_\mu\) is the step function
\[
F_\mu(x) =
\begin{cases}
0, & x<x_1,\\
P_i, & x_i\le x<x_{i+1},\quad i=1,\dots,n-1,\\
P_n=1, & x\ge x_n.
\end{cases}
\]
Let
\[
i=\min\{k:\ P_k\ge s\}.
\]
By minimality of \(i\), we have \(P_{i-1}<s\le P_i\), where we adopt the convention \(P_0=0\). Since \(F_\mu\) is a step function, we have
\[
F_\mu(x) \le P_{i-1} < s \qquad \text{for all } x<x_i.
\]
On the other hand,
\[
F_\mu(x_i)=P_i\ge s.
\]
Therefore, \(x_i\) is the smallest point at which \(F_\mu(x)\ge s\). Hence, \(F_\mu^{-1}(s)=x_i\). Applying this with \(s=Q_j\) gives
\[
T_\mu(y_j)=F_\mu^{-1}(Q_j) = x_i, \qquad i=\min\{k:\ P_k\ge Q_j\}.
\]
The proof is complete.
\end{proof}

Thus, the discrete CDT is obtained by matching each reference cumulative mass level \(Q_j\) with the first target support location whose cumulative mass reaches or exceeds that level.

\subsection{Implementable Discrete CDT Algorithm}

The discrete CDT can be computed efficiently by a monotone cumulative-mass matching procedure. The key observation is that the quantile formula in Proposition~\ref{prop:quantileFormula} requires only cumulative sums and a monotone sweep through the ordered support points.
Recall that
$
Q_j = \sum_{\ell=1}^j q_\ell,$ $P_i = \sum_{k=1}^i p_k.
$
The algorithm proceeds by advancing through the target cumulative masses until the target cumulative level \(P_i\) reaches the reference cumulative level \(Q_j\).

\begin{algorithm}[H]
\caption{Discrete cumulative distribution transform}
\label{alg:dcdt}
\begin{algorithmic}[1]

\Require Reference measure $\sigma=\sum_{j=1}^m q_j\delta_{y_j}$, $y_1<\dots<y_m$; target measure $\mu=\sum_{i=1}^n p_i\delta_{x_i}$, $x_1<\dots<x_n$.
\Ensure Discrete CDT values $T_j=T_\mu(y_j)$, $j=1,\dots,m$.

\State Initialize $i \gets 1$, $P \gets p_1$, $Q \gets 0$.
\For{$j=1,\dots,m$}
    \State Update the reference cumulative mass: $Q \gets Q + q_j$.
    \While{$P < Q$ and $i < n$}
        \State Advance to the next target cumulative level: $i \gets i + 1$, $P \gets P + p_i$.
    \EndWhile
    \If{$P < Q$ and $i = n$} \Comment{Floating-point safeguard at the final mass level}
        \State $P \gets Q$.
    \EndIf
    \State Set $T_j \gets x_i$.
\EndFor
\State \Return $\{T_j\}_{j=1}^m$

\end{algorithmic}
\end{algorithm}

Because both cumulative mass indices advance monotonically and never backtrack, each support point is visited at most once. Therefore, the total computational complexity is \(\mathcal O(m+n)\).

This non-backtracking, two-pointer cumulative mass matching procedure is 
algorithmically reminiscent of classical methods used to compute the 
one-dimensional Wasserstein distance or optimal transport couplings between 
discrete measures (see, e.g., \cite[Section 2.6]{GPeyre_MCuturi_2019a}). 
However, a crucial distinction remains: while standard computational optimal 
transport frameworks focus on generating an optimal transport joint plan 
(a matrix allowing mass-splitting) to evaluate a distance metric, Algorithm~\ref{alg:dcdt} 
strictly constructs a deterministic, vector-valued transport map 
$T_\mu \in \R^m$ to serve as a fixed-dimensional linear representation.

\begin{remark}[Floating-point safeguard]
\rm 
In exact arithmetic, the safeguard is never activated since the total reference and target masses both equal one. Its sole purpose is numerical robustness at the final cumulative mass level, where floating-point roundoff may produce a negligible discrepancy between the accumulated values of \(P\) and \(Q\).
\end{remark}

\section{Inverse Discrete CDT}
\label{s:iDCDT}

The discrete CDT in Algorithm~\ref{alg:dcdt} produces the monotone quantile representative
\[
T_\mu
=
F_\mu^{-1}\circ F_\sigma
\]
defined on the support of the reference measure
$
\sigma=\sum_{j=1}^m q_j\delta_{y_j}.
$
A natural question is whether the target measure \(\mu\) can be recovered
from the transport map \(T_\mu\) by pushforward through the reference
measure.
To investigate this, define the reconstructed measure
\[
\widetilde\mu
:=
(T_\mu)_\#\sigma.
\]

\begin{proposition}[Inverse reconstruction formula]
\label{prop:invDCDT}
Let
$
T_\mu(y_j)=F_\mu^{-1}(Q_j).
$
Then the reconstructed measure satisfies
\[
\widetilde\mu
=
(T_\mu)_\#\sigma
=
\sum_{j=1}^m q_j\delta_{T_\mu(y_j)}.
\]
\end{proposition}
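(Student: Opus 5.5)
The plan is to verify the identity directly from the definition \eqref{eq:pushforward} of the pushforward, testing both sides on an arbitrary Borel set \(A\subset\R\). Since \(\sigma=\sum_{j=1}^m q_j\delta_{y_j}\) is purely atomic, for any measurable \(T\) we have
\[
(T_\#\sigma)(A)=\sigma(T^{-1}(A))=\sum_{j=1}^m q_j\,\delta_{y_j}(T^{-1}(A)) .
\]
The only values of \(T\) that matter are therefore \(T(y_1),\dots,T(y_m)\). This is consistent with the observation after Definition~\ref{def:DCDT} that \(T_\mu\) is determined by its values on the reference support.

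The key step is the pointwise equivalence
\[
\delta_{y_j}(T^{-1}(A))=1 \iff y_j\in T^{-1}(A) \iff T(y_j)\in A \iff \delta_{T(y_j)}(A)=1 .
\]
Applying it with \(T=T_\mu\) gives \((T_\mu)_\#\sigma(A)=\sum_{j} q_j\delta_{T_\mu(y_j)}(A)\) for every Borel \(A\). Both sides are then Borel measures that agree on all Borel sets, so they are equal as measures. By Proposition~\ref{prop:quantileFormula}, each atom location is \(T_\mu(y_j)=F_\mu^{-1}(Q_j)=x_{i(j)}\) with \(i(j)=\min\{k: P_k\ge Q_j\}\). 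Hence the reconstruction is a measure supported on \(\{x_1,\dots,x_n\}\).

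I would also check the measurability of \(T_\mu\), which is needed for the pushforward to make sense. This follows because \(T_\mu=F_\mu^{-1}\circ F_\sigma\) is a composition of monotone functions, and monotone functions are Borel measurable. There is no genuine obstacle here. The only subtlety worth remarking on is that several indices \(j\) may give the same atom \(x_{i(j)}\), so the displayed sum may contain repeated Dirac masses whose weights add up. This is exactly the mechanism by which \(\widetilde\mu\) can differ from \(\mu\), and it motivates the compatibility criterion that follows.
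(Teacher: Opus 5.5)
Your proposal is correct and follows the paper's proof essentially step for step: expand $\sigma(T_\mu^{-1}(A))$ over the atoms $y_j$, use $\delta_{y_j}(T_\mu^{-1}(A))=\delta_{T_\mu(y_j)}(A)$, and conclude equality on every Borel set $A$. The measurability check and the remark about repeated atom locations merging their masses are extras not in the paper's proof (the latter appears in the discussion right after it), and they are fine, up to the harmless caveat that $F_\sigma=0$ on $(-\infty,y_1)$, where $F_\mu^{-1}$ is undefined (or $-\infty$), so $T_\mu$ must be extended arbitrarily there, which is immaterial since only the values $T_\mu(y_j)$ enter.
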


\begin{proof}
By definition of pushforward measure (cf.~\eqref{eq:pushforward}),
$
((T_\mu)_\#\sigma)(A)
=
\sigma(T_\mu^{-1}(A))
$
for every Borel set \(A\subset\R\).
Since
$
\sigma
=
\sum_{j=1}^m q_j\delta_{y_j},
$
we obtain
\[
((T_\mu)_\#\sigma)(A)
=
\sum_{j=1}^m
q_j
\delta_{y_j}(T_\mu^{-1}(A)).
\]
Now observe that
\[
\delta_{y_j}(T_\mu^{-1}(A))
=
\begin{cases}
1, & T_\mu(y_j)\in A,\\
0, & T_\mu(y_j)\notin A,
\end{cases}
\]
which is precisely
\[
\delta_{T_\mu(y_j)}(A).
\]
Therefore,
\[
((T_\mu)_\#\sigma)(A)
=
\sum_{j=1}^m
q_j
\delta_{T_\mu(y_j)}(A).
\]
Since this identity holds for every Borel set \(A\subset\R\),
we conclude that
\[
(T_\mu)_\#\sigma
=
\sum_{j=1}^m q_j\delta_{T_\mu(y_j)}.
\]
The proof is complete. 
\end{proof}

Thus inverse reconstruction transports each reference atom from its
original location \(y_j\) to the transported location \(T_\mu(y_j)\)
while preserving its mass \(q_j\).
The reconstruction therefore redistributes reference mass without
creating or splitting atoms.
If several reference atoms are mapped to the same location, the
corresponding masses combine automatically.
For example, if
\[
T_\mu(y_{j_1})=T_\mu(y_{j_2})=x,
\]
then the reconstructed measure contains the atom
\[
(q_{j_1}+q_{j_2})\delta_x.
\]

\begin{theorem}[Exact reconstruction criterion]
\label{thm:exactReconstruction}
Let \(\sigma=\sum_{j=1}^m q_j\delta_{y_j}\) be the reference measure with strictly positive masses and cumulative levels \(Q_j=\sum_{\ell=1}^j q_\ell\), and let \(\mu=\sum_{i=1}^n p_i\delta_{x_i}\) be the target measure with strictly positive masses and cumulative levels \(P_i=\sum_{k=1}^i p_k\). Let \(\widetilde\mu=(T_\mu)_\#\sigma\) denote the reconstructed measure induced by the discrete CDT, and let 
\[
J_i = \{j\in\{1,\dots,m\}:\ P_{i-1}<Q_j\le P_i\}, \qquad P_0=0,
\]
denote the block of reference indices transported to each target location \(x_i\).

Then exact reconstruction holds, that is, \(\widetilde\mu=\mu\), if and only if every target cumulative mass level \(P_i\) is also a reference cumulative level:
\[
\{P_1,\dots,P_n\}\subseteq\{Q_1,\dots,Q_m\}.
\]
Equivalently, since \(P_n=Q_m=1\), it is enough to require \(\{P_1,\dots,P_{n-1}\}\subseteq\{Q_1,\dots,Q_m\}\). In this case, the target masses are exactly recovered by the aggregated reference block masses:
\[
p_i=\sum_{j\in J_i}q_j \qquad \text{for each } i=1,\dots,n.
\]
\end{theorem}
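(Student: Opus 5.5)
The plan is to compute the reconstructed measure explicitly in terms of the blocks \(J_i\), and then compare its atom masses with the \(p_i\). By Proposition~\ref{prop:quantileFormula}, \(T_\mu(y_j)=x_i\) exactly when \(i=\min\{k:\ P_k\ge Q_j\}\), which is equivalent to \(P_{i-1}<Q_j\le P_i\), that is, \(j\in J_i\). The sets \(J_1,\dots,J_n\) therefore partition \(\{1,\dots,m\}\); every \(j\) lands in some \(J_i\) because \(Q_j\le 1=P_n\). Combining this with Proposition~\ref{prop:invDCDT} gives
\[
\widetilde\mu=\sum_{i=1}^n\Bigl(\sum_{j\in J_i}q_j\Bigr)\delta_{x_i},
\]
where an empty block contributes mass zero. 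Since the \(x_i\) are distinct and the \(p_i\) are positive, \(\widetilde\mu=\mu\) holds if and only if \(p_i=\sum_{j\in J_i}q_j\) for every \(i\). This already yields the final displayed formula once the equivalence is proved.

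Next I exploit the fact that, because \(Q_j\) is strictly increasing in \(j\) (all \(q_j>0\)), each block \(J_i\) is a set of consecutive indices. Summing the block masses over \(i\le k\) gives
\[
\sum_{i\le k}\sum_{j\in J_i}q_j=\sum_{j:\,Q_j\le P_k}q_j=Q_{r(k)},\qquad r(k)=\max\{j:\ Q_j\le P_k\},
\]
with the convention \(Q_0=0\). So exact reconstruction is equivalent to the condition \(Q_{r(k)}=P_k\) for all \(k\), which follows by taking differences of consecutive partial sums.

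For the direction (\(\Leftarrow\)), suppose \(P_k=Q_{j_k}\) for some index \(j_k\). Then \(r(k)=j_k\), so \(Q_{r(k)}=P_k\), and exact recovery follows. For the direction (\(\Rightarrow\)), suppose \(\widetilde\mu=\mu\). Then \(P_k=\widetilde\mu((-\infty,x_k])=Q_{r(k)}\). Here \(r(k)\ge1\) because \(P_k>0\), so \(P_k\) belongs to \(\{Q_1,\dots,Q_m\}\). The reduction to \(k\le n-1\) is immediate, since \(P_n=Q_m=1\).

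The only delicate step is the block and partial-sum bookkeeping: I must verify that the union \(\bigcup_{i\le k}J_i\) equals \(\{j: Q_j\le P_k\}\), including the boundary cases \(Q_j=P_k\) and empty blocks. Once that identity is in place, the remaining steps are routine.
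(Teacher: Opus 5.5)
Your proof is correct and follows essentially the same route as the paper. You identify $J_i$ as the preimage block of $x_i$ via Proposition~\ref{prop:quantileFormula}, obtain $\widetilde\mu(\{x_i\})=\sum_{j\in J_i}q_j$, and use the partial-sum identity with $r(k)=\max\{j:\ Q_j\le P_k\}$, which is exactly the paper's $b_k$ in the converse direction. The only cosmetic difference is that you also derive sufficiency from this identity, whereas the paper proves it by writing the blocks explicitly as $J_i=\{j_{i-1}+1,\dots,j_i\}$ with $P_i=Q_{j_i}$.
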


\begin{remark}[Connection to the Monge Problem]
\rm 
Theorem~\ref{thm:exactReconstruction} delineates the precise boundary where a deterministic transport map can push forward one discrete measure into another without splitting mass. In classical optimal transport, this subset condition dictates whether a discrete 1D Monge map exists (see \cite[Section 1.1]{FSantambrogio_2015a} or \cite[Section 2.2]{rockafellar2014superquantile}). When it is violated, exact matching can only be accomplished via a mass-splitting Kantorovich transport plan.
\end{remark}

\begin{proof}
By Proposition~\ref{prop:quantileFormula},
\[
T_\mu(y_j)=x_i \quad
\mbox{when}
\quad 
i=\min\{k:\ P_k\ge Q_j\}.
\]
Since the cumulative masses \(P_k\) are strictly increasing, this
minimality condition is equivalent to requiring that \(Q_j\) lies between
the consecutive cumulative levels \(P_{i-1}\) and \(P_i\), that is,
\[
P_{i-1}<Q_j\le P_i,
\]
with the convention \(P_0=0\). Thus the reference atoms transported to \(x_i\) are precisely those with indices in \(J_i=\{j:\ P_{i-1}<Q_j\le P_i\}\). 

By Proposition~\ref{prop:invDCDT}, we have 
\(
\widetilde\mu=(T_\mu)_\#\sigma,
\)
the reconstructed mass assigned to any Borel set \(A\subset\R\) is
\[
\widetilde\mu(A)=\sigma(T_\mu^{-1}(A)).
\]
Because
\(
\sigma=\sum_{j=1}^m q_j\delta_{y_j},
\)
this becomes
\[
\widetilde\mu(A)
=
\sum_{\{j:\,T_\mu(y_j)\in A\}} q_j.
\]
Taking \(A=\{x_i\}\), we obtain
\[
\widetilde\mu(\{x_i\})
=
\sum_{\{j:\,T_\mu(y_j)=x_i\}} q_j.
\]
Since
\(
T_\mu(y_j)=x_i
\iff j\in J_i,
\)
it follows that
\[
\widetilde\mu(\{x_i\})
=
\sum_{j\in J_i} q_j.
\]
Suppose first that
\(
\{P_1,\dots,P_n\}\subseteq\{Q_1,\dots,Q_m\}.
\)
Since \(P_0=0\), \(P_n=1=Q_m\), and each \(P_i\) coincides with some
reference cumulative level, there exist indices
$
0=j_0<j_1<\cdots<j_n=m
$
such that
$
P_i=Q_{j_i}
$
$\text{for each }i=0,\dots,n.
$
Substituting this into the definition
$
J_i=\{j:\ P_{i-1}<Q_j\le P_i\}
$
gives
\[
J_i=\{j:\ Q_{j_{i-1}}<Q_j\le Q_{j_i}\}.
\]
Since the cumulative levels \(Q_j\) are strictly increasing, this is
equivalent to
\[
j_{i-1}<j\le j_i,
\]
that is,
\[
J_i=\{j_{i-1}+1,\dots,j_i\}.
\]
Therefore,
\[
\widetilde\mu(\{x_i\})
=
\sum_{j=j_{i-1}+1}^{j_i}q_j
=
Q_{j_i}-Q_{j_{i-1}}
=
P_i-P_{i-1}
=
p_i.
\]
Thus \(\widetilde\mu\) assigns mass \(p_i\) to each target location
\(x_i\), and no mass elsewhere. Hence
\(
\widetilde\mu=\mu.
\)

Conversely, suppose \(\widetilde\mu=\mu\). Fix an arbitrary index \(k\in\{1,\dots,n\}\). Because the reconstructed masses agree with the true target masses at every atom, we can evaluate their cumulative sums simultaneously:
\[
P_k = \sum_{i=1}^k p_i = \sum_{i=1}^k \widetilde\mu(\{x_i\}) = \sum_{i=1}^k \sum_{j\in J_i}q_j.
\]
The index sets \(J_i=\{j:\ P_{i-1}<Q_j\le P_i\}\) are contiguous and mutually disjoint. Because the intervals \((P_{i-1},P_i]\) form a partition of \((0,P_k]\), their index union over \(i=1,\dots,k\) is
\[
\bigcup_{i=1}^k J_i = \{j\in\{1,\dots,m\}:\ 0<Q_j\le P_k\}.
\]
Since \(P_k>0\), the sum
$
P_k=\sum_{i=1}^k\sum_{j\in J_i}q_j
$
contains at least one strictly positive term. Hence
$
\{j:\ 0<Q_j\le P_k\}
$
is nonempty, so the index
$
b_k=\max\{j:\ Q_j\le P_k\}
$
is well-defined. 
Because the cumulative levels \(Q_j\) are strictly increasing, whenever
\(Q_j\le P_k\) holds for some index \(j\), it also holds for every
earlier index \(\ell\le j\). Since \(b_k\) is the largest index
satisfying \(Q_{b_k}\le P_k\), it follows that
\[
\{j\in\{1,\dots,m\}:\ Q_j\le P_k\}
=
\{1,\dots,b_k\}.
\]
Therefore,
\[
P_k
=
\sum_{j=1}^{b_k}q_j
=
Q_{b_k}.
\]
Since \(b_k\in\{1,\dots,m\}\), this shows that
$
P_k=Q_{b_k}\in\{Q_1,\dots,Q_m\}.
$
Since \(k\) was arbitrary, it follows that
$
P_k\in\{Q_1,\dots,Q_m\}
$
$\text{for every }k=1,\dots,n.
$
Hence
$
\{P_1,\dots,P_n\}\subseteq\{Q_1,\dots,Q_m\},
$
which completes the proof.
\end{proof}

\begin{remark}[Mass compatibility and reference design]
\rm 
Theorem~\ref{thm:exactReconstruction} shows that exact reconstruction
fails precisely when some target mass boundary falls strictly between two
reference cumulative levels, thereby requiring a reference atom to be
split into smaller pieces. Since deterministic discrete transport
preserves the mass of each reference atom, such splitting is impossible.

This characterization provides a practical criterion for reference
design: exact invertibility can be verified a priori by checking whether
the target cumulative mass levels are contained in the reference
cumulative levels. When this condition fails, refinement improves
reconstruction by reducing cumulative-mass quantization error.
\end{remark}

\begin{example}[Failure and recovery of exact reconstruction]
\rm 
Consider first the reference measure
$
\sigma=\delta_0,
$
and the target measure
$
\mu=\frac12\delta_{-1}+\frac12\delta_1.
$
Since the reference measure consists of a single atom carrying total mass
\(1\), we have
$
Q_1=1.
$
Therefore,
\[
T_\mu(0)
=
F_\mu^{-1}(1)
=
1.
\]
Consequently, the reconstructed measure is
\[
\widetilde\mu
=
(T_\mu)_\#\sigma
=
\delta_1,
\]
which does not equal \(\mu\).
The obstruction is that the single reference atom carries total mass
\(1\), and a deterministic transport map cannot split this mass between
the two target locations \(-1\) and \(1\).

However, the situation changes if the reference measure contains
sufficiently many atoms.
For example, let
$
\sigma
=
\frac12\delta_0+\frac12\delta_1,
$
while keeping the same target measure
$
\mu
=
\frac12\delta_{-1}
+
\frac12\delta_1.
$
Then
$
Q_1=\frac12,
$
$
Q_2=1.
$
Hence
$
T_\mu(0)
=
F_\mu^{-1}\!\left(\frac12\right)
=
-1,
$
and
$
T_\mu(1)
=
F_\mu^{-1}(1)
=
1.
$
Therefore,
$
(T_\mu)_\#\sigma
=
\frac12\delta_{-1}
+
\frac12\delta_1
=
\mu.
$
This illustrates that exact reconstruction is governed entirely by mass
compatibility between the reference and target atomic decompositions.
\end{example}

This phenomenon highlights a fundamental departure from the continuous 
setting. In the classical continuous CDT  framework \cite{kolouri2018cdt}, 
the transform is a guaranteed 
bijective mapping whenever the reference and target measures possess 
strictly positive probability density functions. For fully discrete measures, 
however, bijectivity breaks down because deterministic transport maps 
cannot split mass; exact invertibility is thus strictly conditional on mass 
decomposition compatibility as characterized by Theorem~\ref{thm:exactReconstruction}.

The previous results describe reconstruction at the measure-theoretic
level. We now formulate the corresponding linear-time computational
procedure.
Given the discrete CDT values
\[
T_j=T_\mu(y_j),
\qquad j=1,\dots,m,
\]
the inverse discrete CDT reconstructs the pushforward measure
\[
\widetilde\mu=(T_\mu)_\#\sigma.
\]
In general, \(\widetilde\mu\) need not coincide with the original target
measure \(\mu\).
The inverse reconstruction transports each reference mass \(q_j\) from the
location \(y_j\) to the transported location \(T_j\).
Although the reconstructed measure is explicitly given by
(cf.~Proposition~\ref{prop:invDCDT})
\[
\widetilde\mu
=
\sum_{j=1}^m q_j\delta_{T_j},
\]
distinct reference atoms may be transported to the same target location.
The following algorithm computes the aggregated atomic representation of
\(\widetilde\mu\).

\begin{algorithm}[H]
\caption{Inverse discrete cumulative distribution transform}
\begin{algorithmic}[1]

\Require
Reference masses and locations
$
\{(q_j,y_j)\}_{j=1}^m,
$
and discrete CDT values
$
\{T_j\}_{j=1}^m.
$

\Ensure
Aggregated reconstructed measure
$
\widetilde\mu
=
\sum_z \widetilde p_z\delta_z.
$

\State Initialize an empty list of target atoms and masses.

\For{\(j=1,\dots,m\)}

    \If{\(j=1\) or \(T_j\neq T_{j-1}\)}

        \State Create a new target atom at location \(T_j\) with mass \(q_j\).

    \Else

        \State Add mass \(q_j\) to the existing atom at location \(T_j\).

    \EndIf

\EndFor

\State Return
$
\widetilde\mu
=
\sum_z \widetilde p_z\delta_z,
$
where
$
\widetilde p_z
=
\sum_{\{j:\ T_j=z\}} q_j.
$

\end{algorithmic}
\end{algorithm}

Thus, if several indices \(j\) produce the same transported location
\(T_j=z\), then their masses are aggregated:
\[
\widetilde p_z
=
\sum_{\{j:\ T_j=z\}}q_j.
\]
Because the transport map \(T_\mu\) is nondecreasing, the sequence
$
\{T_j\}_{j=1}^m
$
is naturally ordered.
Consequently, identical transported locations occur contiguously, and the
aggregation can be performed by a single linear sweep through the
reference atoms.
Therefore, the total computational complexity of the inverse algorithm is
$
\mathcal O(m).
$
No additional sorting or search operations are required beyond the single
monotone sweep through the transported atoms.

\begin{remark}[Forward--inverse complexity]
\rm 
The forward and inverse discrete CDT algorithms have respective
complexities \(\mathcal O(m+n)\) and \(\mathcal O(m)\), making the
entire discrete transform pipeline computationally linear in the number
of atoms.
\end{remark}

\section{Discrete CDT properties}
\label{sec:properties}

\subsection{Translation property}

One of the fundamental properties of the CDT is the linearization of
translations.

\begin{proposition}[Translation property]
Let \(a\in\R\), and define the translated target measure
\[
\mu_a
=
(\tau_a)_\#\mu,
\qquad
\tau_a(x)=x+a.
\]
Then the corresponding discrete CDT satisfies
\[
T_{\mu_a}(y_j)
=
T_\mu(y_j)+a,
\qquad j=1,\dots,m.
\]
Equivalently,
\[
T_{\mu_a}
=
T_\mu+a,
\]
where the addition is understood pointwise on the reference support.
\end{proposition}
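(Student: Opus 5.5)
The plan is to compute the atomic representation of \(\mu_a\) explicitly and then apply Proposition~\ref{prop:quantileFormula} to both \(\mu\) and \(\mu_a\). Since \(\tau_a\) is a bijection of \(\R\), the definition \eqref{eq:pushforward} gives, for every Borel set \(A\),
\[
\mu_a(A)=\mu(\tau_a^{-1}(A))=\sum_{i=1}^n p_i\,\delta_{x_i}(A-a)=\sum_{i=1}^n p_i\,\delta_{x_i+a}(A).
\]
Hence
\[
\mu_a=\sum_{i=1}^n p_i\delta_{x_i+a}.
\]
This is the same computation as in the proof of Proposition~\ref{prop:invDCDT}.

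Next I will observe that \(x_1+a<\dots<x_n+a\), because translation preserves strict order. So \(\mu_a\) is an admissible target measure in the sense of Section~\ref{sec:dcdt}. Its sorted support points are \(x_i+a\), and its masses \(p_i\) are unchanged and in the same order. Consequently its cumulative levels coincide with those of \(\mu\): \(P_i^{(a)}=P_i\) for all \(i\).

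Applying Proposition~\ref{prop:quantileFormula} to \(\mu_a\) with the same reference \(\sigma\) then gives
\[
T_{\mu_a}(y_j)=x_i+a, \qquad i=\min\{k:\ P_k\ge Q_j\}.
\]
The index \(i\) here is exactly the one selected for \(\mu\), since neither the \(P_k\) nor the \(Q_j\) depend on \(a\). Therefore \(T_{\mu_a}(y_j)=T_\mu(y_j)+a\) for each \(j\), which is the pointwise identity claimed.

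There is no real obstacle. The only point needing care is to confirm that the ordering, and hence the cumulative sums, are preserved under translation, so that the same index is selected. As an alternative check, I could argue at the CDF level: \(F_{\mu_a}(x)=F_\mu(x-a)\), which gives \(F_{\mu_a}^{-1}(s)=F_\mu^{-1}(s)+a\) directly from \eqref{eq:genearlized_inverse} by the substitution \(x\mapsto x+a\) in the infimum. Composing with \(F_\sigma\) then yields the result.
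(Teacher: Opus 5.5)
Your proof is correct, but its main line differs from the paper's. After writing $\mu_a=\sum_i p_i\delta_{x_i+a}$, the paper works entirely with distribution functions. It shows $F_{\mu_a}(x)=F_\mu(x-a)$ and deduces $F_{\mu_a}^{-1}(s)=F_\mu^{-1}(s)+a$ for every $s\in(0,1]$ by substituting in the infimum. Only then does it evaluate at $s=Q_j$; this is exactly what you relegated to an ``alternative check.'' Your primary argument instead goes through the index formula of Proposition~\ref{prop:quantileFormula}. You note that translation preserves the order of the atoms and hence the cumulative levels $P_k$, so the same index $i=\min\{k:\ P_k\ge Q_j\}$ is selected for $\mu$ and $\mu_a$. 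This is the argument the paper uses for the composition property, Proposition~\ref{prop:compositionProperty}; your proof is its special case $S=\tau_a$, just as the paper obtains the scaling law from it. The paper's route gives an identity for the whole quantile function that never uses the atomic structure of $\mu$, since $F_{\mu_a}(x)=\mu((-\infty,x-a])$ holds for any $\mu\in\mathcal P(\R)$. It therefore carries over verbatim to the continuous CDT. Your route is more combinatorial and shows directly that the cumulative-mass matching of Algorithm~\ref{alg:dcdt} is unaffected by translation. It also extends immediately to any strictly increasing deformation, but it relies on $\mu$ being atomic with sorted support.
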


\begin{proof}
Since
$
\mu=\sum_{i=1}^n p_i\delta_{x_i},
$
the translated target measure is
$
\mu_a
=
\sum_{i=1}^n p_i\delta_{x_i+a}.
$
Its cumulative distribution function satisfies
\[
F_{\mu_a}(x)
=
\mu_a((-\infty,x])
=
\mu((-\infty,x-a])
=
F_\mu(x-a).
\]
For \(s\in(0,1]\), the generalized inverse is
\[
F_{\mu_a}^{-1}(s)
=
\inf\{x\in\R:\ F_{\mu_a}(x)\ge s\}.
\]
Using \(F_{\mu_a}(x)=F_\mu(x-a)\), we obtain
\[
F_{\mu_a}(x)\ge s
\iff
F_\mu(x-a)\ge s.
\]
Setting \(y=x-a\), we get
\[
F_{\mu_a}^{-1}(s)
=
\inf\{y+a:\ F_\mu(y)\ge s\}
=
a+\inf\{y:\ F_\mu(y)\ge s\}.
\]
Therefore,
\[
F_{\mu_a}^{-1}(s)
=
F_\mu^{-1}(s)+a.
\]
Evaluating this identity at the reference cumulative mass levels \(Q_j\)
gives
\[
T_{\mu_a}(y_j)
=
F_{\mu_a}^{-1}(Q_j)
=
F_\mu^{-1}(Q_j)+a
=
T_\mu(y_j)+a.
\]
The proof is complete. 
\end{proof}

\begin{remark}[Connection with translated discrete signals]
\rm 
If a discrete signal is represented by the atomic measure
$
\mu=\sum_{i=1}^n w_i\delta_{x_i},
$
then spatial translation of the signal corresponds precisely to the
pushforward operation
$
\mu_s=(\tau_s)_\#\mu.
$
The proposition therefore shows that the discrete CDT linearizes signal
translations exactly as in the classical continuous CDT framework.
\end{remark}

\subsection{Composition property}

The discrete CDT also linearizes monotone compositions.

\begin{proposition}[Composition property]
\label{prop:compositionProperty}
Let \(S:\R\to\R\) be strictly increasing, and define
\[
\nu=S_\#\mu.
\]
Then
\[
T_\nu(y_j)=S(T_\mu(y_j)),
\qquad j=1,\dots,m.
\]
Equivalently,
\[
T_\nu=S\circ T_\mu
\]
on the support of \(\sigma\).
\end{proposition}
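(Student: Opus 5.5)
The argument reduces to the explicit discrete quantile formula of Proposition~\ref{prop:quantileFormula}, applied once to $\mu$ and once to $\nu$. Since $\mu=\sum_{i=1}^n p_i\delta_{x_i}$ with $x_1<\dots<x_n$, the pushforward is
\[
\nu=S_\#\mu=\sum_{i=1}^n p_i\delta_{S(x_i)},
\]
by the same computation as in the proof of Proposition~\ref{prop:invDCDT}: for any Borel $A$, $\mu(S^{-1}(A))=\sum_{\{i:\,S(x_i)\in A\}}p_i$. Strict monotonicity of $S$ gives two facts. First, the points $S(x_1)<\dots<S(x_n)$ are again strictly increasing and pairwise distinct, so no atoms merge. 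Second, this listing is already the sorted support of $\nu$, with the same masses $p_i$ in the same order. Hence $\nu$ fits the standing setting of a target measure, and its cumulative levels coincide with those of $\mu$, namely $P_i=\sum_{k\le i}p_k$.

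Next I apply Proposition~\ref{prop:quantileFormula} to $\nu$, with the same reference $\sigma$ and the same levels $Q_j$. The index $i=\min\{k:\ P_k\ge Q_j\}$ depends only on the $P_k$ and $Q_j$, and is therefore identical for $\mu$ and $\nu$. This yields
\[
T_\nu(y_j)=S(x_i)=S\bigl(T_\mu(y_j)\bigr), \qquad j=1,\dots,m.
\]
Equivalently, $T_\nu=S\circ T_\mu$ on $\supp\sigma$.

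An alternative route works at the level of the generalized inverse, in the style of the translation proof. One would show $F_\nu(x)=F_\mu(S^{-1}(x))$ on the range of $S$ and deduce $F_\nu^{-1}(s)=S(F_\mu^{-1}(s))$. I prefer the atomic route because it sidesteps this step: $S$ need not be continuous or surjective, and the infimum in \eqref{eq:genearlized_inverse} would otherwise need care.

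The only genuine obstacle is verifying that the sorted atomic representation of $S_\#\mu$ has exactly the same mass sequence as $\mu$. This is precisely where strict monotonicity is needed. If $S$ were merely nondecreasing, two atoms could collapse to one point, shifting the indices, although the formula would in fact survive. If $S$ were decreasing, the order would reverse. I will state the strictness usage explicitly at that step, and the rest is immediate.
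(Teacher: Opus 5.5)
Your proof is correct and follows the paper's argument essentially verbatim. You write $S_\#\mu=\sum_i p_i\delta_{S(x_i)}$, use strict monotonicity to see that the sorted atoms keep the same masses and cumulative levels $P_i$, and then apply Proposition~\ref{prop:quantileFormula} to both $\mu$ and $\nu$ with the common index $i=\min\{k:\ P_k\ge Q_j\}$. Your side remark that the identity still holds for merely nondecreasing $S$ is also correct, though it is not needed.
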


\begin{proof}
Recall that
\[
\mu=\sum_{i=1}^n p_i\delta_{x_i},
\qquad
x_1<\cdots<x_n,
\]
with cumulative masses
$
P_i=\sum_{k=1}^i p_k.
$
By definition of the pushforward,
\[
\nu=S_\#\mu
=
\sum_{i=1}^n p_i\delta_{S(x_i)}.
\]
Since \(S\) is strictly increasing, the transformed support points remain
strictly ordered:
$
S(x_1)<\cdots<S(x_n).
$
Moreover, the cumulative mass levels associated with these ordered points
remain
$
P_i=\sum_{k=1}^i p_k.
$
By Proposition~\ref{prop:quantileFormula},
\[
T_\mu(y_j)=x_i,
\qquad
i=\min\{k:\ P_k\ge Q_j\}.
\]
Applying the same formula to \(\nu\), whose ordered atoms are
$
S(x_i)
$
with the same masses \(p_i\), gives
\[
T_\nu(y_j)=S(x_i),
\qquad
i=\min\{k:\ P_k\ge Q_j\}.
\]
The index \(i\) is the same in both formulas because it is determined only
by the cumulative masses \(P_k\) and \(Q_j\). Therefore,
\[
T_\nu(y_j)=S(x_i)=S(T_\mu(y_j)).
\]
Hence
\[
T_\nu=S\circ T_\mu
\]
on \(\supp\sigma\).
\end{proof}

The strict monotonicity assumption ensures that the ordering of the
transported support points is preserved.

\subsection{Scaling property}

\begin{proposition}[Scaling property]
Let \(a>0\), and define the scaled target measure
\[
\mu_a=(D_a)_\#\mu,
\qquad
D_a(x)=ax.
\]
Then the corresponding discrete CDT satisfies
\[
T_{\mu_a}(y_j)
=
a\,T_\mu(y_j),
\qquad
j=1,\dots,m.
\]
Equivalently,
\[
T_{\mu_a}
=
aT_\mu,
\]
where the multiplication is understood pointwise on the support of
\(\sigma\).
\end{proposition}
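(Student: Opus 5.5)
The scaling law is a special case of the composition law. Since \(a>0\), the dilation \(D_a(x)=ax\) is strictly increasing on \(\R\). So Proposition~\ref{prop:compositionProperty} applies directly with \(S=D_a\) and \(\nu=\mu_a=(D_a)_\#\mu\), giving
\[
T_{\mu_a}(y_j)=D_a\bigl(T_\mu(y_j)\bigr)=a\,T_\mu(y_j),\qquad j=1,\dots,m.
\]
This is exactly the claim. I would present this one-line reduction as the proof.

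For a self-contained check paralleling the translation proof, I would also verify the identity through the generalized inverse. The first step is to compute \(F_{\mu_a}(x)=\mu((-\infty,x/a])=F_\mu(x/a)\), which uses \(a>0\) to keep the direction of the inequality. The second step is the change of variables \(x=ay\) in the infimum:
\[
F_{\mu_a}^{-1}(s)=\inf\{ay:\ F_\mu(y)\ge s\}=a\,\inf\{y:\ F_\mu(y)\ge s\}=a\,F_\mu^{-1}(s).
\]
Pulling the factor \(a\) out of the infimum is valid because multiplication by a positive constant is an order-preserving bijection of \(\R\). The final step evaluates this identity at \(s=Q_j=F_\sigma(y_j)\).

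There is no real obstacle. The only point needing care is that positivity of \(a\) is essential. For \(a<0\) the atoms reverse order, so the quantile index \(\min\{k:\ P_k\ge Q_j\}\) is no longer preserved. For \(a=0\) the map is not strictly increasing, and all atoms collapse to \(0\).
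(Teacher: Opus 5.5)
Your proof is correct and matches the paper's argument: the paper also observes that $D_a$ is strictly increasing for $a>0$ and applies Proposition~\ref{prop:compositionProperty} with $S=D_a$. Your generalized-inverse computation is a valid extra check.

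One small quibble with your closing remark: at $a=0$ the identity still holds trivially, since $\mu_0=\delta_0$ gives $T_{\mu_0}\equiv 0$. So positivity is needed for the composition route, but for the conclusion itself only the case $a<0$ fails.
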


\begin{proof}
Since
\[
\mu_a=(D_a)_\#\mu,
\]
the scaling map \(D_a(x)=ax\) is strictly increasing for \(a>0\).
Therefore, Proposition~\ref{prop:compositionProperty} applies and yields
\[
T_{\mu_a}
=
D_a\circ T_\mu.
\]
Hence, for every \(j=1,\dots,m\),
\[
T_{\mu_a}(y_j)
=
D_a(T_\mu(y_j))
=
a\,T_\mu(y_j).
\]
\end{proof}

\subsection{Consistency under refinement}

While exact measure reconstruction can fail at coarse resolutions due to atomic mass mismatch (as established in Theorem~\ref{thm:exactReconstruction}), we now show that this obstruction vanishes asymptotically as the reference grid is refined.
Let \(\mu=\sum_{i=1}^n p_i\delta_{x_i}\) be a fixed target probability measure. For each \(N\in\mathbb N\), let \(\sigma_N=\sum_{j=1}^{m_N} q_j^{(N)}\delta_{y_j^{(N)}}\) denote a reference probability measure with strictly positive masses satisfying \(\sum_{j=1}^{m_N} q_j^{(N)}=1\). Let \(T_N=\mathcal C_{\sigma_N}(\mu)\) be the corresponding discrete CDT, and let \(\widetilde\mu_N=(T_N)_\#\sigma_N\) denote the reconstructed measure. We assume that the reference grid undergoes asymptotic refinement:
\[
\delta_N \coloneqq \max_{1\le j\le m_N} q_j^{(N)}\to 0 \qquad \text{as } N\to\infty.
\]

\begin{theorem}[Reconstruction consistency under refinement]
\label{thm:refinementConsistency}
Under the refinement assumption, the cumulative distribution functions of the reconstructed measures converge uniformly to the target cumulative distribution function:
\[
\lim_{N\to\infty} \|F_{\widetilde\mu_N} - F_\mu\|_\infty = 0.
\]
Consequently, the reconstructed measures converge weakly to the target measure: \(\widetilde\mu_N \rightharpoonup \mu\).
\end{theorem}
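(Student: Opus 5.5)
The plan is to compute $F_{\widetilde\mu_N}$ explicitly using the block structure from the proof of Theorem~\ref{thm:exactReconstruction}, and then compare it with $F_\mu$ atom by atom. Write $Q_j^{(N)}$ for the cumulative levels of $\sigma_N$ and
\[
J_i^{(N)}=\{j:\ P_{i-1}<Q_j^{(N)}\le P_i\}.
\]
By Propositions~\ref{prop:quantileFormula} and~\ref{prop:invDCDT}, $\widetilde\mu_N$ is supported on $\{x_1,\dots,x_n\}$ and satisfies $\widetilde\mu_N(\{x_i\})=\sum_{j\in J_i^{(N)}}q_j^{(N)}$. Consequently both $F_{\widetilde\mu_N}$ and $F_\mu$ are step functions that are constant on $(-\infty,x_1)$, on each $[x_k,x_{k+1})$, and on $[x_n,\infty)$. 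The first two pieces are trivial: both functions vanish on $(-\infty,x_1)$ and equal $1$ on $[x_n,\infty)$. It therefore suffices to show that
\[
\max_{1\le k\le n}\bigl|F_{\widetilde\mu_N}(x_k)-P_k\bigr|\le\delta_N .
\]
This bound gives uniform convergence directly, since $\delta_N\to0$.

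To prove the bound, fix $k$. The intervals $(P_{i-1},P_i]$ for $i\le k$ partition $(0,P_k]$, so
\[
F_{\widetilde\mu_N}(x_k)=\sum_{i\le k}\sum_{j\in J_i^{(N)}}q_j^{(N)}=\sum_{j:\,Q_j^{(N)}\le P_k}q_j^{(N)}=Q^{(N)}_{b_k},
\]
where $b_k=\max\{j:\ Q_j^{(N)}\le P_k\}$, with the convention $b_k=0$ and $Q_0^{(N)}=0$ if this set is empty. By maximality of $b_k$, either $b_k=m_N$, in which case $Q_{b_k}^{(N)}=1\ge P_k$ forces $P_k=1$ and the difference is zero, or $Q^{(N)}_{b_k+1}>P_k$. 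In the latter case
\[
0\le P_k-Q^{(N)}_{b_k}<Q^{(N)}_{b_k+1}-Q^{(N)}_{b_k}=q^{(N)}_{b_k+1}\le\delta_N .
\]
This is exactly the cumulative-mass quantization estimate. It also shows that $F_{\widetilde\mu_N}\le F_\mu$ pointwise, which is a useful sanity check.

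Weak convergence follows from uniform CDF convergence. Convergence of the CDFs at every point, and in particular at continuity points of $F_\mu$, is equivalent to weak convergence on $\R$ by the standard portmanteau/Helly characterization. Tightness is automatic here because all the measures are supported in $[x_1,x_n]$. Alternatively, for a bounded continuous $f$ one can write
\[
\int f\,d\widetilde\mu_N-\int f\,d\mu=\sum_i f(x_i)\bigl(\widetilde\mu_N(\{x_i\})-p_i\bigr).
\]
Each mass difference is a difference of consecutive CDF errors, hence bounded by $2\delta_N$, so the whole expression is at most $2n\|f\|_\infty\delta_N\to0$. I expect the only delicate step to be the identification $F_{\widetilde\mu_N}(x_k)=Q_{b_k}^{(N)}$, together with the edge cases where $b_k=0$ or $b_k=m_N$. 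These require carefully reusing the partition argument from the converse part of the proof of Theorem~\ref{thm:exactReconstruction}. Everything else is elementary.
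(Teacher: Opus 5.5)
Your proof is correct and is essentially the paper's argument. Both show that the reconstructed CDF at a point equals $Q^{(N)}$ at the largest index whose level does not exceed $F_\mu$ there, then bound the gap by the next reference mass $q^{(N)}_{b_k+1}\le\delta_N$. The paper works at arbitrary $x$ via $F_\mu^{-1}(s)\le x\iff s\le F_\mu(x)$, while you restrict to the atoms $x_k$ using the blocks $J_i^{(N)}$, which is only a cosmetic difference. One small slip: in the case $b_k=m_N$, what forces $P_k=1$ is the defining inequality $1=Q^{(N)}_{m_N}\le P_k$, not $1\ge P_k$.
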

\begin{proof}
Let \(F_\mu\) and \(F_{\widetilde\mu_N}\) denote the cumulative
distribution functions of \(\mu\) and \(\widetilde\mu_N\), respectively.
Fix \(x\in\R\), and define
\[
i(x)=\max\{i:\ x_i\le x\},
\]
with the convention \(i(x)=0\) if \(x<x_1\). Then
\[
F_\mu(x)=P_{i(x)},
\qquad
P_i=\sum_{k=1}^i p_k,
\qquad
P_0=0.
\]
By the discrete quantile formula established earlier,
\[
T_N(y_j^{(N)})=F_\mu^{-1}(Q_j^{(N)}),
\qquad
Q_j^{(N)}=\sum_{\ell=1}^j q_\ell^{(N)}.
\]
Using the generalized inverse identity
\[
F_\mu^{-1}(s)\le x
\quad\Longleftrightarrow\quad
s\le F_\mu(x),
\]
we obtain
\[
T_N(y_j^{(N)})\le x
\quad\Longleftrightarrow\quad
Q_j^{(N)}\le F_\mu(x).
\]
Therefore,
\[
F_{\widetilde\mu_N}(x)
=
\sum_{\{j:\ Q_j^{(N)}\le F_\mu(x)\}} q_j^{(N)}.
\]
Let \(J_N(x)=\max\{j:\ Q_j^{(N)}\le F_\mu(x)\}\), with the convention
that the sum is zero if this set is empty. Then
\[
F_{\widetilde\mu_N}(x)=Q_{J_N(x)}^{(N)}.
\]
Since \(Q_{J_N(x)}^{(N)}\le F_\mu(x)\), we have
$
F_{\widetilde\mu_N}(x)\le F_\mu(x).
$
Moreover, by maximality of \(J_N(x)\), either \(J_N(x)=m_N\), in which
case \(Q_{m_N}^{(N)}=1\le F_\mu(x)\). Since \(F_\mu(x)\le 1\), this implies
\(F_\mu(x)=1\), and hence
$
F_{\widetilde\mu_N}(x)=F_\mu(x)=1.
$
Otherwise,
\[
Q_{J_N(x)+1}^{(N)}>F_\mu(x).
\]
In the latter case,
\[
Q_{J_N(x)+1}^{(N)}
=
Q_{J_N(x)}^{(N)}+q_{J_N(x)+1}^{(N)},
\]
and therefore
\[
F_\mu(x)-Q_{J_N(x)}^{(N)}
<
q_{J_N(x)+1}^{(N)}.
\]
Using
\[
F_{\widetilde\mu_N}(x)=Q_{J_N(x)}^{(N)},
\]
we obtain
\[
F_\mu(x)-F_{\widetilde\mu_N}(x)
<
q_{J_N(x)+1}^{(N)}
\le
\max_{1\le j\le m_N}q_j^{(N)}.
\]
Therefore, 
\[
|F_{\widetilde\mu_N}(x)-F_\mu(x)|
\le
\max_{1\le j\le m_N}q_j^{(N)}.
\]
Because this inequality holds for all $x \in \R$ uniformly, we obtain the global bound:
\[
\|F_{\widetilde\mu_N} - F_\mu\|_\infty \le \delta_N.
\]
As $\delta_N \to 0$, the uniform norm vanishes, establishing uniform convergence of the distribution functions. By standard probability theory, uniform convergence of distribution functions implies weak convergence of the underlying measures, completing the proof.
\end{proof}

The above theorem shows that although exact reconstruction may fail for coarse
reference measures due to atomic mass incompatibility, this obstruction
is purely a finite-resolution phenomenon.
As the reference measure is refined, the reconstruction error vanishes
uniformly, confirming that deterministic discrete optimal transport is
asymptotically consistent.

\section{Discrete Signed Cumulative Distribution Transform}
\label{sec:scdt}

\subsection{Signed discrete signals}

Let
\[
f=\sum_{i=1}^n a_i\delta_{x_i},
\qquad
x_1<\cdots<x_n,
\]
be a signed discrete signal, where \(a_i\in\R\).
Define the positive and negative coefficient parts by
\(
a_i^+=\max\{a_i,0\}
\)
and
\(
a_i^-=\max\{-a_i,0\}.
\)
Then
\[
f=f^+-f^-,
\qquad
\mbox{where}
\qquad
f^+
=
\sum_{i=1}^n a_i^+\delta_{x_i},
\qquad
f^-
=
\sum_{i=1}^n a_i^-\delta_{x_i}.
\]
Both \(f^+\) and \(f^-\) are finite positive atomic measures.
Their total masses are
\[
m^+=\sum_{i=1}^n a_i^+,
\qquad
m^-=\sum_{i=1}^n a_i^-.
\]
Whenever \(m^\pm>0\), the normalized probability measures are defined by
\[
\mu^\pm=\frac1{m^\pm}f^\pm.
\]

\subsection{Definition of the discrete SCDT}

Let
$
\sigma=\sum_{j=1}^m q_j\delta_{y_j}
$
be a fixed reference measure. The discrete signed cumulative distribution
transform (SCDT) is obtained by applying the discrete CDT separately to
the normalized positive and negative parts.

\begin{definition}[Discrete SCDT]
Assume \(m^+>0\) and \(m^->0\). The discrete SCDT of \(f\) relative to
\(\sigma\) is defined by
\[
\mathcal S_\sigma(f)
=
\left(
m^+,\,
T_{\mu^+},\,
m^-,\,
T_{\mu^-}
\right),
\]
where
\[
T_{\mu^+}
=
F_{\mu^+}^{-1}\circ F_\sigma,
\qquad
T_{\mu^-}
=
F_{\mu^-}^{-1}\circ F_\sigma.
\]
\end{definition}
If one of the masses \(m^\pm\) is zero, the corresponding component is
omitted or recorded as empty.

\subsection{Forward discrete SCDT algorithm}

\begin{algorithm}[H]
\caption{Discrete signed cumulative distribution transform}
\begin{algorithmic}[1]

\Require
Signed discrete signal
$
f=\sum_{i=1}^n a_i\delta_{x_i},
$
$x_1<\cdots<x_n,$ and reference measure
$
\sigma=\sum_{j=1}^m q_j\delta_{y_j}.
$

\Ensure
Discrete SCDT representation
\[
\mathcal S_\sigma(f)
=
(m^+,T^+,m^-,T^-).
\]

\State Compute \quad
$
a_i^+=\max\{a_i,0\},
\qquad
a_i^-=\max\{-a_i,0\}.
$

\State Compute \quad
$
m^+=\sum_{i=1}^n a_i^+,
\qquad
m^-=\sum_{i=1}^n a_i^-.
$

\If{\(m^+>0\)}
    \State Normalize the positive masses: \quad
    $
    p_i^+=\frac{a_i^+}{m^+}.
    $

    \State Define the normalized positive measure
    \quad
    $
    \mu^+
    =
    \sum_{i=1}^n p_i^+\delta_{x_i}.
    $

    \State Compute \quad
    $
    T^+ = \mathcal C_\sigma(\mu^+)
    $
    using the discrete CDT algorithm.
\Else
    \State Set \(T^+\) to be empty.
\EndIf

\If{\(m^->0\)}
    \State Normalize the negative masses: \quad
    $
    p_i^-=\frac{a_i^-}{m^-}.
    $

    \State Define the normalized negative measure
    \quad
    $
    \mu^-
    =
    \sum_{i=1}^n p_i^-\delta_{x_i}.
    $

    \State Compute \quad
    $
    T^- = \mathcal C_\sigma(\mu^-)
    $
    using the discrete CDT algorithm.
\Else
    \State Set \(T^-\) to be empty.
\EndIf

\State \Return \((m^+,T^+,m^-,T^-)\).

\end{algorithmic}
\end{algorithm}

\subsection{Inverse discrete SCDT}

Given a discrete SCDT representation
\(
(m^+,T^+,m^-,T^-),
\)
the canonical inverse reconstruction is obtained by reconstructing the
positive and negative channels separately and then combining them
algebraically. In particular,  
we reconstruct the normalized positive and negative pushforward measures
by
\[
\widetilde\mu^+
=
(T^+)_\#\sigma,
\qquad
\widetilde\mu^-
=
(T^-)_\#\sigma.
\]
The reconstructed signed signal is then defined by
\[
\widetilde f
=
m^+\widetilde\mu^+
-
m^-\widetilde\mu^-.
\]
Equivalently,
\[
\widetilde f
=
m^+
\sum_{j=1}^m q_j\delta_{T^+(y_j)}
-
m^-
\sum_{j=1}^m q_j\delta_{T^-(y_j)}.
\]
In general,
\[
\widetilde f\neq f,
\]
because the positive and negative parts are each subject to the same
atomic mass-compatibility issue as the unsigned discrete CDT.

\begin{algorithm}[H]
\caption{Inverse discrete signed cumulative distribution transform}
\begin{algorithmic}[1]

\Require
Reference measure
$
\sigma=\sum_{j=1}^m q_j\delta_{y_j},
$
and SCDT representation
$
(m^+,T^+,m^-,T^-).
$

\Ensure
Reconstructed signed discrete signal \(\widetilde f\).

\If{\(m^+>0\)}
    \State Compute the reconstructed normalized positive measure
    \quad
    $
    \widetilde\mu^+=(T^+)_\#\sigma
    $
    using the inverse discrete CDT algorithm.

    \State Rescale the positive part
    \quad
    $
    \widetilde f^+ = m^+\widetilde\mu^+.
    $
\Else
    \State Set \(\widetilde f^+=0\).
\EndIf

\If{\(m^->0\)}
    \State Compute the reconstructed normalized negative measure
    \quad
    $
    \widetilde\mu^-=(T^-)_\#\sigma
    $
    using the inverse discrete CDT algorithm.

    \State Rescale the negative part
    \quad
    $
    \widetilde f^- = m^-\widetilde\mu^-.
    $
\Else
    \State Set \(\widetilde f^-=0\).
\EndIf

\State Form the signed reconstruction
\quad
$
\widetilde f=\widetilde f^+-\widetilde f^-.
$

\State \Return \(\widetilde f\).

\end{algorithmic}
\end{algorithm}

\begin{remark}
\rm 
In practical implementations, atoms occurring at identical spatial
locations may be aggregated algebraically in order to obtain a simplified
canonical atomic representation. For example,
\[
0.7\delta_x-0.2\delta_x
=
0.5\delta_x.
\]
This aggregation step does not alter the reconstructed signed measure
itself.
\end{remark}

\subsection{Translation property}

The discrete SCDT inherits the translation property from the unsigned
discrete CDT.

\begin{proposition}[Translation property for the discrete SCDT]
Let
$
f
=
\sum_{i=1}^n a_i\delta_{x_i},
$
and define the translated signal
$
f_a
=
\sum_{i=1}^n a_i\delta_{x_i+a},
\qquad a\in\R.
$
Then
\[
m_a^+=m^+,
\qquad
m_a^-=m^-,
\]
and
\[
T_a^+(y_j)=T^+(y_j)+a,
\qquad
T_a^-(y_j)=T^-(y_j)+a.
\]
\end{proposition}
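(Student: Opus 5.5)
The plan is to reduce the statement to the unsigned translation property established in Section~\ref{sec:properties}. The first step is to identify the positive and negative parts of the translated signal. Since $f_a=\sum_{i=1}^n a_i\delta_{x_i+a}$ carries exactly the same coefficients $a_i$ as $f$, only at shifted locations, its coefficient parts are again $a_i^+$ and $a_i^-$. Hence
\[
f_a^\pm=\sum_{i=1}^n a_i^\pm\delta_{x_i+a}=(\tau_a)_\#f^\pm,
\qquad \tau_a(x)=x+a.
\]
Summing the coefficients immediately gives $m_a^\pm=\sum_i a_i^\pm=m^\pm$, which is the first claim.

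Next I will handle the degenerate cases $m^\pm=0$. When $m^+=0$ (or $m^-=0$), the corresponding components are empty for both $f$ and $f_a$, so the identity for that channel is vacuous and I only need to consider channels with positive mass. For such a channel, I normalize and use linearity of pushforward in the measure:
\[
\mu_a^\pm=\frac{1}{m_a^\pm}f_a^\pm=\frac1{m^\pm}(\tau_a)_\#f^\pm=(\tau_a)_\#\mu^\pm.
\]
One small point needs checking before the unsigned result applies. That result was stated for measures written with strictly positive weights and sorted support, whereas $\mu^\pm$ may have zero weights at some $x_i$. I will restrict $\mu^\pm$ to the indices with $a_i^\pm>0$. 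This leaves a genuine atomic probability measure with ordered support, and its translate is $\mu_a^\pm$, because translation preserves the ordering $x_i+a<x_k+a$.

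With this in place, I apply the Translation property proposition to $\mu^+$ and $\mu^-$ separately, which yields
\[
T_{\mu_a^\pm}(y_j)=T_{\mu^\pm}(y_j)+a
\]
for every $j$. Since $T_a^\pm$ and $T^\pm$ are by definition these discrete CDTs, the second claim follows.

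I expect no real obstacle. The only care needed is the bookkeeping in the middle step: zero-weight atoms must be removed so that the earlier hypotheses of positive masses and sorted support hold. Even this could be bypassed, since the proof of the unsigned translation property only uses the CDF identity $F_{\mu_a}(x)=F_\mu(x-a)$, which holds for any probability measure.
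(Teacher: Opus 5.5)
Your proposal is correct and follows the paper's proof: the coefficients are unchanged, so $m_a^\pm=m^\pm$; the normalized parts satisfy $\mu_a^\pm=(\tau_a)_\#\mu^\pm$; and the unsigned translation property is then applied to each channel separately. Your extra bookkeeping (discarding zero-weight atoms so the positive-mass hypothesis holds, and treating empty channels when $m^\pm=0$) is a harmless refinement that the paper leaves implicit.
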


\begin{proof}
Translation changes the atom locations from \(x_i\) to \(x_i+a\), but it
does not change the signed coefficients \(a_i\). Hence the positive and
negative masses remain unchanged:
\[
m_a^+=m^+,
\qquad
m_a^-=m^-.
\]
Moreover, the normalized positive and negative measures are translated by
the map
\[
\tau_a(x)=x+a.
\]
Applying the translation property of the discrete CDT separately to
\(\mu^+\) and \(\mu^-\) yields
\[
T_a^+=T^++a,
\qquad
T_a^-=T^-+a.
\]
\end{proof}

\subsection{Zero-crossing and noise issues}

The discrete SCDT introduces an additional issue not present in the
unsigned CDT. Since the transform is computed separately on the positive
and negative parts, small perturbations near zero may change the sign of
an atom. Consequently, a coefficient may move from the positive channel
to the negative channel, or conversely.

In the continuous setting, such changes often involve only a very small
amount of mass. In the fully discrete setting, however, a single sign
flip may move an entire atom between the two channels. Thus the discrete
SCDT may become sensitive near zero crossings.

A simple practical remedy is to introduce a dead zone around zero.
Given a tolerance \(\varepsilon>0\), define the thresholded coefficients
by
\[
a_i^{+,\varepsilon}
=
\begin{cases}
a_i, & a_i>\varepsilon,\\
0, & |a_i|\le\varepsilon,\\
0, & a_i<-\varepsilon,
\end{cases}
\quad  
\mbox{and}
\quad 
a_i^{-,\varepsilon}
=
\begin{cases}
0, & a_i>\varepsilon,\\
0, & |a_i|\le\varepsilon,\\
-a_i, & a_i<-\varepsilon.
\end{cases}
\]
The thresholded SCDT is then obtained by replacing \(a_i^\pm\) with
\(a_i^{\pm,\varepsilon}\) before normalization. This prevents small
noise-induced sign changes from producing spurious positive or negative
atoms.

\section{Numerical examples}
\label{sec:numerics}

We now present several simple numerical examples illustrating the
behavior of the discrete CDT and discrete SCDT. All computations are
performed directly at the level of atomic measures using the algorithms
developed in the previous sections. No interpolation, density estimation,
or numerical inversion is used.

\subsection{Empirical measures and translation linearization}

One of the principal advantages of the discrete CDT framework is that it
applies directly to empirical measures.
Suppose
\[
x_1,\dots,x_n
\]
are samples drawn from an underlying probability distribution.
The associated empirical measure is
\[
\mu_n
=
\frac1n\sum_{i=1}^n \delta_{x_i}.
\]
Because the discrete CDT operates entirely through cumulative mass
matching, the transform can be computed directly from the empirical
measure itself without constructing a continuous density approximation.

In this experiment, we generate empirical samples from a Gaussian
distribution and compare the CDT of the empirical measure with the CDT of
a translated empirical measure.
More precisely, if
\[
\mu_n^{(a)}
=
(\tau_a)_\#\mu_n,
\qquad
\tau_a(x)=x+a,
\]
then the translation property predicts
\[
T_{\mu_n^{(a)}}(y_j)
=
T_{\mu_n}(y_j)+a.
\]
Figure~\ref{fig:empiricalCDT} illustrates this phenomenon.
The top panel shows the empirical distributions in physical space.
The bottom panel shows the corresponding CDT coordinates plotted against
the quantile coordinate
\[
Q_j=\sum_{\ell=1}^j q_\ell.
\]
While translation produces a nonlinear geometric shift in physical space,
the corresponding CDT representations differ essentially by a constant
vertical translation.
This demonstrates the linearization property of the discrete CDT directly
at the level of empirical atomic measures.

\begin{figure}[h!]
\centering
\includegraphics[width=.65\textwidth]{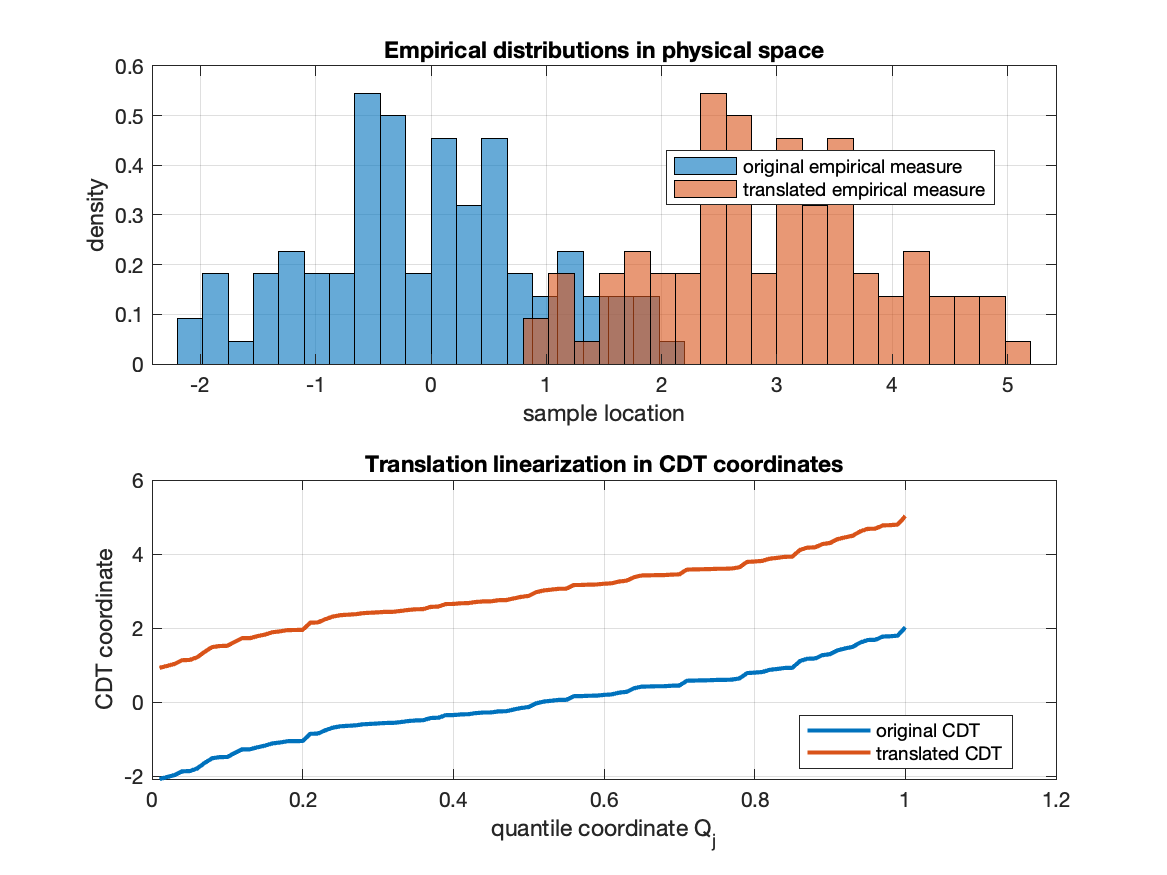}
\caption{
Empirical discrete CDT and translation linearization.
Top: empirical distributions in physical space.
Bottom: corresponding CDT coordinates.
Translation in physical space becomes an additive shift in CDT space.
}
\label{fig:empiricalCDT}
\end{figure}

\subsection{Compatibility and reference refinement}

We next investigate how reconstruction depends on the compatibility between the target cumulative mass levels and the reference cumulative levels.
Consider the target measure
\[
\mu
=
0.3\,\delta_{-1}
+
0.7\,\delta_{1}.
\]
By Theorem~\ref{thm:exactReconstruction}, exact reconstruction holds if
and only if the target cumulative mass level
\[
P_1=0.3
\]
coincides with one of the reference cumulative levels.
To examine this criterion computationally, we consider the family of
uniform reference measures
\[
\sigma_N
=
\frac1N\sum_{j=1}^N \delta_{y_j},
\]
for increasing values of \(N\).
For each \(N\), we compute the discrete CDT and reconstruct the
associated pushforward measure
\[
\widetilde\mu_N
=
(T_N)_\#\sigma_N.
\]

\begin{figure}[h!]
\centering
\includegraphics[width=.65\textwidth]{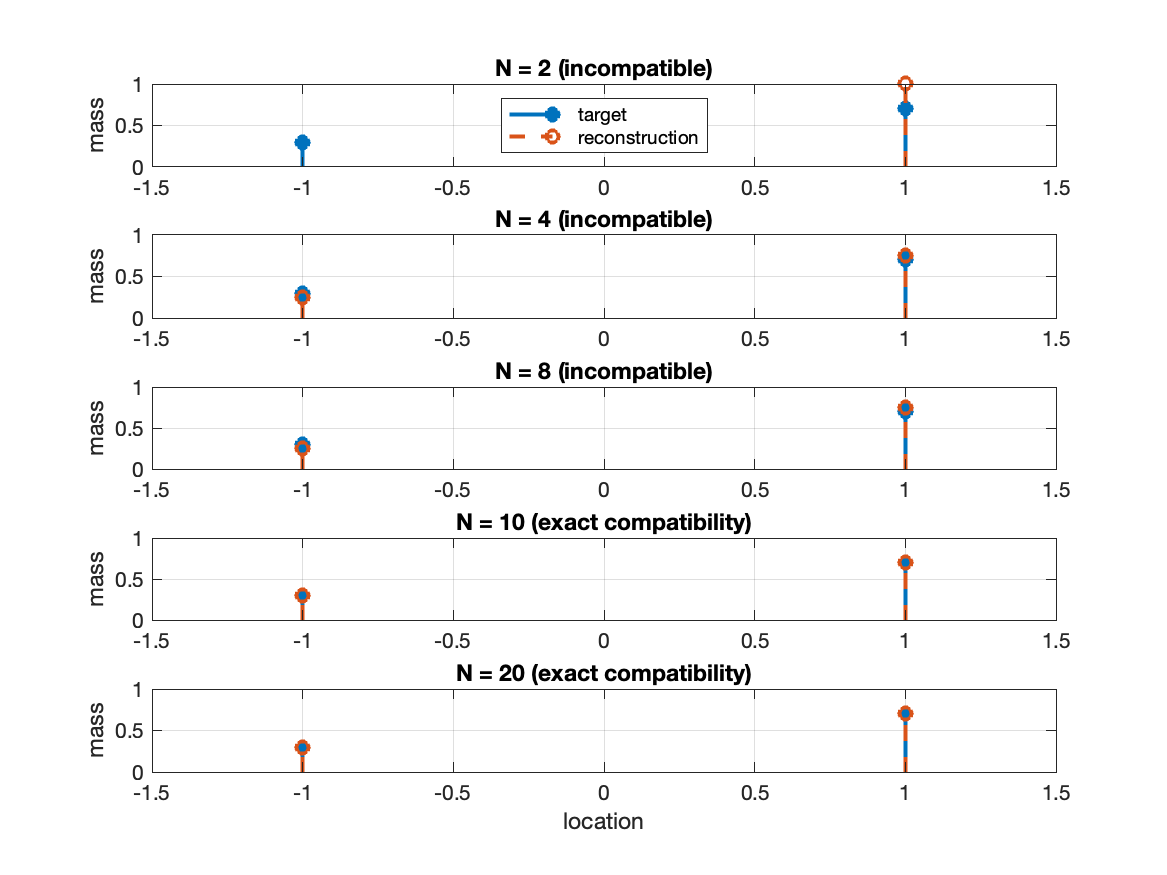}
\caption{
Compatibility and reference refinement.
Exact reconstruction occurs precisely when the target cumulative mass
level is represented among the reference cumulative levels, as predicted
by Theorem~\ref{thm:exactReconstruction}.
}
\label{fig:compatibility}
\end{figure}

Figure~\ref{fig:compatibility} illustrates the reconstruction behavior
for several reference resolutions.
For incompatible resolutions, the target mass level \(0.3\) lies strictly
between consecutive reference cumulative levels, so exact reconstruction
is impossible because deterministic transport cannot split reference
atoms.
The reconstructed measure therefore approximates the target by assigning
the nearest realizable aggregate mass.

At the compatible resolution \(N=10\), the reference cumulative level
\[
Q_3=\frac{3}{10}=0.3
\]
matches the target cumulative level exactly, and exact reconstruction is
recovered.
This exact agreement persists for all finer compatible refinements.

This experiment illustrates the algorithmic content of
Theorem~\ref{thm:exactReconstruction}: reconstruction accuracy is
governed by cumulative-mass compatibility, while reference refinement
reduces cumulative quantization error and restores exact invertibility
whenever the target cumulative levels are resolved by the reference
discretization. More generally, even when exact compatibility is absent
at finite resolution, the asymptotic convergence predicted by
Theorem~\ref{thm:refinementConsistency} ensures that the reconstructed
measures converge weakly to the target under sufficiently fine
refinement.

\subsection{Translation linearization for the discrete SCDT}

We next illustrate the discrete signed cumulative distribution  transform
and its exact translation linearization property. Figure~\ref{fig:scdt_translation} shows a signed discrete signal
consisting of positive and negative Gaussian-type components together
with a translated copy of the same signal.

The lower panel shows the corresponding positive and negative SCDT
components. Both components undergo the same additive shift in SCDT
coordinates:
\[
T_a^+=T^++a,
\qquad
T_a^-=T^-+a.
\]
Thus the discrete SCDT inherits the translation linearization property
from the unsigned discrete CDT.

\begin{figure}[h!]
\centering
\includegraphics[width=0.65\textwidth]{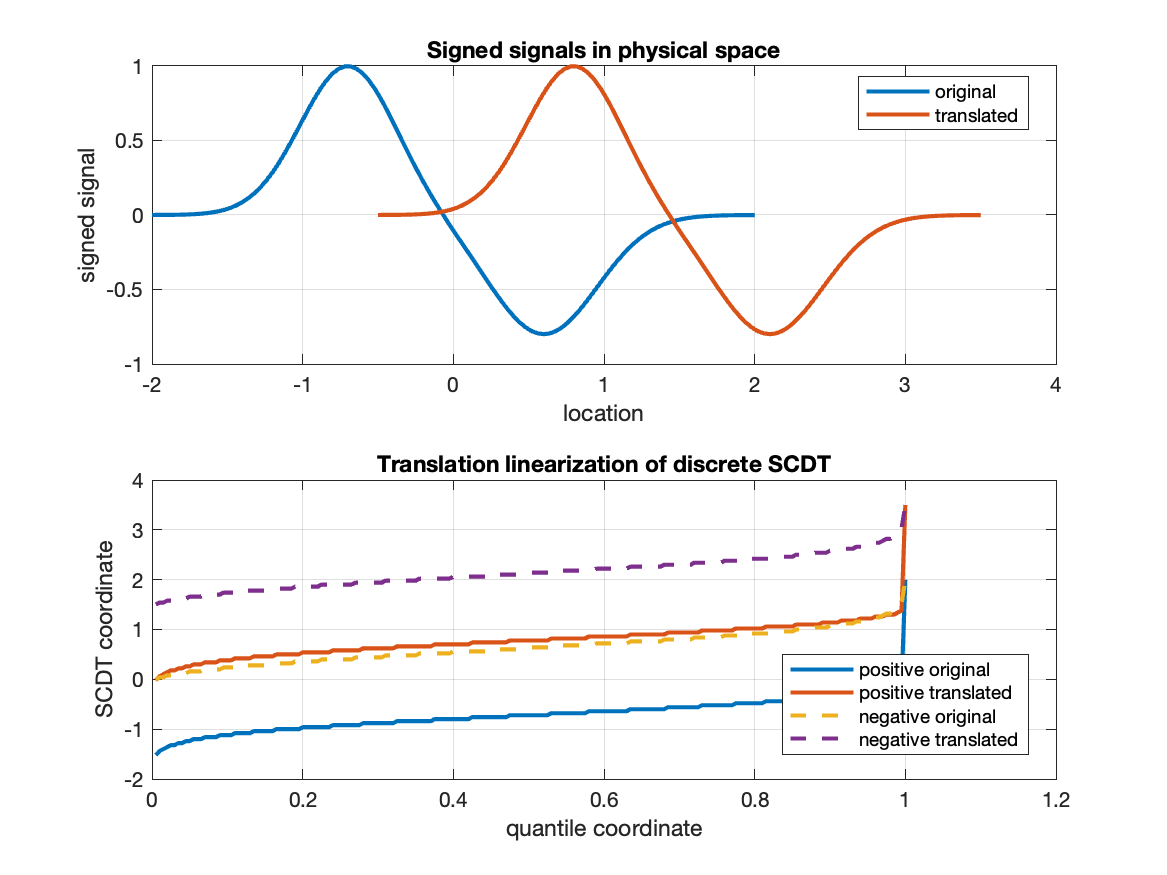}
\caption{Translation linearization for the discrete SCDT. The positive
and negative transport components both shift additively under spatial
translation of the signed signal.}
\label{fig:scdt_translation}
\end{figure}

\subsection{Zero-crossing sensitivity and thresholding}

Finally, we illustrate the sensitivity of the discrete SCDT near
zero crossings introduced in Section~\ref{sec:scdt}. We consider a signed signal with a zero crossing at the
origin and perturb it with small random noise. In the fully discrete
setting, noise near zero may change the sign of individual atoms, causing
them to move between the positive and negative transport channels.

Figure~\ref{fig:scdt_threshold} compares the reconstruction obtained
using a hard sign split with the reconstruction obtained using a
dead-zone threshold. The thresholded version ignores coefficients
satisfying
\[
|a_i|\le\varepsilon,
\]
thereby suppressing small noise-induced sign oscillations near the zero
crossing.

In the numerical experiment, the dead-zone threshold removes only a small
number of near-zero atoms while leaving the total positive and negative
masses essentially unchanged. This produces a more stable positive and
negative decomposition without significantly altering the global signal
structure.

\begin{figure}[h!]
\centering
\includegraphics[width=0.65\textwidth]{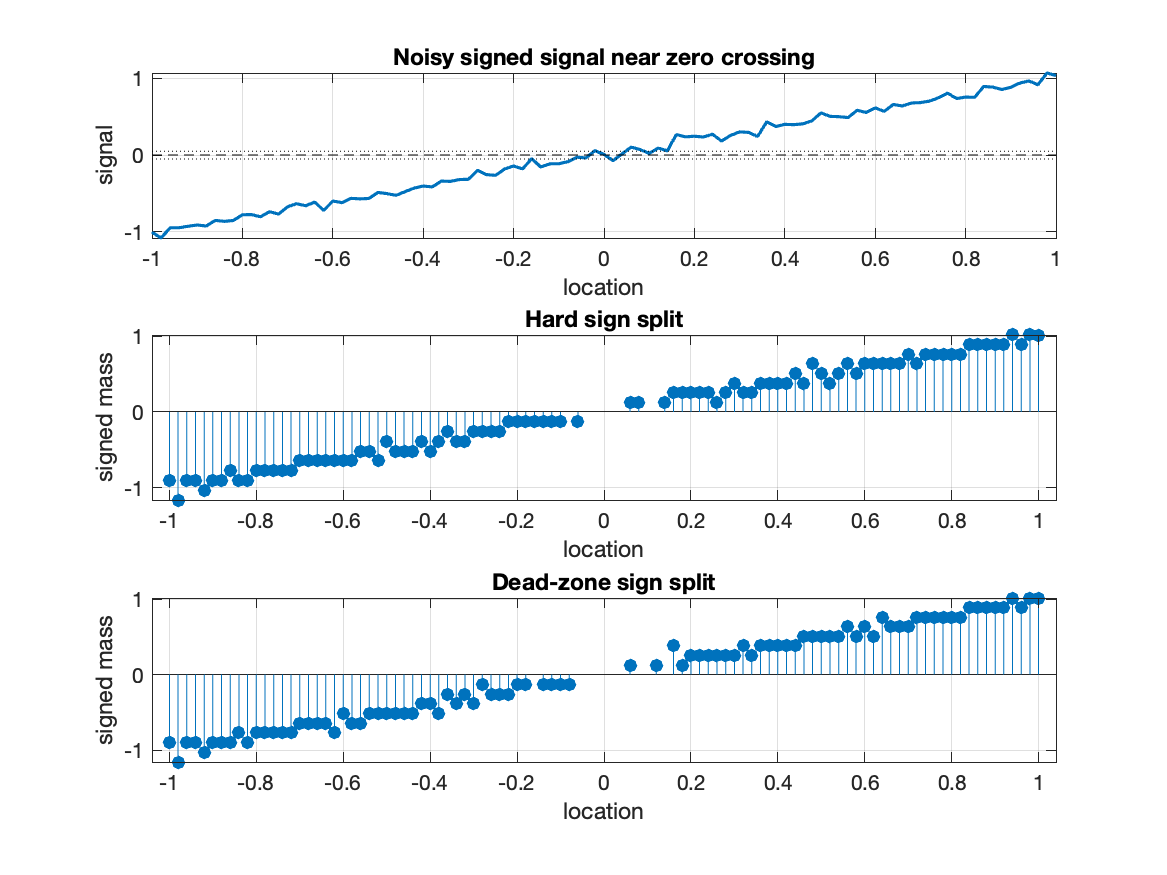}
\caption{Zero-crossing sensitivity for the discrete SCDT. The dead-zone
threshold suppresses small noise-induced sign changes near the zero
crossing, producing a more stable positive/negative decomposition.}
\label{fig:scdt_threshold}
\end{figure}

These numerical experiments confirm the principal theoretical features of
the proposed discrete transport framework: exact translation
linearization, approximate reconstruction under reference refinement, and
stabilized signed transport representations under thresholded
positive-negative decomposition.

\section{Conclusion}

This paper developed a fully discrete cumulative distribution transform
(CDT) for atomic probability measures on the real line based on monotone
quantile transport. The resulting framework admits explicit
cumulative-mass characterizations and linear-time algorithms for both
forward transformation and inverse reconstruction, requiring no
interpolation or density estimation.

A central contribution is the identification of a precise compatibility
criterion governing exact reconstruction. Unlike the continuous setting,
deterministic transport between atomic measures cannot generally split
masses, leading to an intrinsic finite-resolution obstruction. Exact
recovery was shown to occur if and only if the target cumulative mass
levels are represented among the reference cumulative levels. It was
further proved that this obstruction disappears asymptotically under
reference refinement, yielding weak convergence of reconstructed
measures to the target.

In addition, several structural properties of the discrete CDT were
established, including translation, composition, and scaling laws. The
framework was also extended to a discrete signed cumulative
distribution transform equipped with a thresholded positive-negative
decomposition that improves robustness near zero crossings.

Taken together, these results provide a rigorous transport-based
representation framework for atomic and signed discrete data. Possible
directions for future work include higher-dimensional extensions,
adaptive reference design based on cumulative-mass compatibility, and
applications to transport-based representation learning, signal filtering, 
communications, compression, inverse problems, and data-driven modeling.

\bibliographystyle{plain}
\bibliography{references}

\end{document}